\newcommand{\abs}[1]{{\left|#1\right|}}
\newcommand{\norma}[1]{{\left\Vert#1\right\Vert}}
\def\XXint#1#2#3{{\setbox0=\hbox{$#1{#2#3}{\int}$}
    \vcenter{\hbox{$#2#3$}}\kern-.5\wd0}}
\theoremstyle{definition}
\newtheorem{definizione}{Definition}[section]
\theoremstyle{plain}
\newtheorem{teorema}{Theorem}[section]
\newtheorem{lemma}[teorema]{Lemma}
\newtheorem{corollario}[teorema]{Corollary}
\theoremstyle{definition}
\newtheorem{oss}[teorema]{Remark}
\DeclareMathOperator{\R}{\mathbb{R}}
\DeclareMathOperator*{\esssup}{\text{ess sup}}
\DeclareMathOperator*{\essinf}{\text{ess inf}}
\DeclareMathOperator*{\essosc}{\text{ess osc}}
\newcommand{\myfootnote}[2]{\begingroup
	\def\@makefnmark{}%
	\addtocounter{footnote}{-1}%
	\footnote{\textbf{#1} #2}%
	\endgroup}
\newcommand{\parzder}[2]{\frac{\partial {#1}}{\partial {#2}}}
\newcommand{\asso}{\star}
\newcommand{\convdeb}{\rightharpoonup}
\newcommand{\eps}{\varepsilon}
\newcommand{\meno}{\setminus}
\newcommand{\lnorma}[1]{{\left\vert\kern-0.25ex\left\vert\kern-0.25ex\left\vert #1 \right\vert\kern-0.25ex\right\vert\kern-0.25ex\right\vert}}
\theoremstyle{definition}
\newtheorem{open problem}{Open Problem}
\numberwithin{equation}{section}
\title{On the symmetric rearrangement of the gradient of a Sobolev function}
\author{Vincenzo Amato, Andrea Gentile}
\date{\today}
\begin{document}
	\maketitle

\begin{abstract}
    In this paper, we generalize a classical comparison result for solutions to Hamilton-Jacobi equations with Dirichlet boundary conditions, to solutions to Hamilton-Jacobi equations with non-zero boundary trace.

    As a consequence, we prove the isoperimetric inequality for the torsional rigidity (with Robin boundary conditions) and for other functionals involving such boundary conditions. \\
    \\
    \textsc{MSC 2020:} 46E30, 35A23, 35J92.\\
    \textsc{Keywords:} Rearrangements, Robin boundary conditions.

\end{abstract}

\section{Introduction}
Let $\Omega$ be a bounded, open and Lipschitz set and let $u \in W^{1,p}(\Omega)$, for some $p \geq 1$, be a non-negative function.

In this paper, we deal with the problem of comparing a function $u \in W^{1,p}(\Omega)$ with a radial function having the modulus of the gradient equi-rearranged with $\abs{\nabla u}$.
Hence, we aim to extend the results contains in  and Nunziante \cite{GN} to a more general setting. 

Throughout this article, $\abs{\cdot}$ will denote both the $n$-dimensional Lebesgue measure and the $(n-1)$-dimensional Hausdorff measure, the meaning will be clear by the context.

If $A$ is a bounded and open set with the same measure as $\Omega$, we say that a function $f^\asso \in L^p(A)$ is equi-rearranged to $f \in L^p(\Omega)$ if they have the same distribution function, i.e.
\begin{definizione}
	Let $f: \Omega \to \R$ be a measurable function, the \emph{distribution function} of $f$ is the function $\mu_f : [0,+\infty[\, \to [0, +\infty[$ defined by
	\[
	\mu_f(t)= \abs{\Set{x \in \Omega \, :\,  \abs{f(x)} > t}}.
	\]
\end{definizione}

In order to state our results, we recall some definitions
\begin{definizione}
\label{rearrangements}
Let $f: \Omega \to \R$ be a measurable function:
    \begin{itemize}
    \item  the \emph{decreasing rearrangement} of $f$, denoted by $f^\ast$, is the distribution function of $\mu_f$. Moreover, we can write
    \[
    f^\ast(s)= \inf \{ t \geq 0 \, |\, \mu_f(t) < s\};
    \]
    \item  the \emph{increasing rearrangement} of $f$ is defined as
    \[
    f_\ast(s)= f^\ast(\abs{\Omega}-s);
    \]
    \item the \emph{spherically symmetric decreasing rearrangement} of $f$, defined in $\Omega^\sharp$ i.e. the ball centered at the origin with the same measure as $\Omega$, is the function
    \[
    f^\sharp(x) = f^\ast(\omega_n \abs{x}^n),
    \]
    where $\omega_n$ is the measure of the $n$-dimensional unit-ball of $\R^n$;
    \item the \emph{spherically symmetric increasing rearrangement} of $f$, defined in $\Omega^\sharp$, is
    \[
    f_\sharp(x) = f_\ast(\omega_n \abs{x}^n).
    \]
    \end{itemize}
\end{definizione}
Clearly, we can construct several rearrangements of a given function $f$, but the one we will refer to is the spherically symmetric increasing rearrangement defined in $\Omega^\sharp$.

The starting point of our work, and many others, is \cite[Theorem 2.2]{GN}

\begin{teorema}
	\label{Giarrusso_Nunziante}
	Let $p \geq 1$, $f \colon \Omega \to \R$, $H \colon \R^n \to \R$ be measurable non-negative functions and let $K \colon [0,+\infty) \to [0,+\infty)$ be a strictly increasing real-valued function such that
	\[
	0 \leq K(\abs{y}) \leq H(y) \qquad \forall y \in \R^n \qquad \text{ and } K^{-1}(f) \in L^p(\Omega)
	\]
	Let $v \in W_0^{1,p}(\Omega)$ be a function that satisfy
	\[
	\begin{cases}
		H(\nabla v) = f(x) &\text{a.e. in }\Omega \\
		v = 0 &\text{on } \partial \Omega
	\end{cases}
	\]
	then, denoting with $\overline{v}$ the unique decreasing spherically symmetric solution to
	\[
	\begin{cases}
		K(\abs{\nabla \overline{v}}) = f_{\sharp}(x) & \text{a.e. in } \Omega^{\sharp} \\
		\overline{v} = 0 & \text{on } \partial \Omega^{\sharp}
	\end{cases}
	\]
	it holds
	\begin{equation}
		\label{eq_Giarrusso_Nunziante}
		\norma{v}_{L^1(\Omega)} \leq \norma{\overline{v}}_{L^1(\Omega^{\sharp})}
	\end{equation}
\end{teorema}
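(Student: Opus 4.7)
The plan is to adapt Talenti's symmetrization technique to this Hamilton--Jacobi setting. The first step is to notice that, since $K(|y|)\leq H(y)$ with $K$ strictly increasing, the identity $H(\nabla v) = f$ yields the a.e.\ pointwise bound $|\nabla v(x)|\leq K^{-1}(f(x))$ on $\Omega$. Introducing the distribution function $\mu(t) = |\{v>t\}|$ and applying the coarea formula one obtains
\[
-\mu'(t) \;=\; \int_{\{v=t\}} \frac{1}{|\nabla v|}\, d\mathcal{H}^{n-1} \;\geq\; \int_{\{v=t\}} \frac{1}{K^{-1}(f)}\, d\mathcal{H}^{n-1}.
\]
Coupling the Cauchy--Schwarz inequality on the level set with the Euclidean isoperimetric inequality $P(\{v>t\})\geq n\omega_n^{1/n}\mu(t)^{(n-1)/n}$ then gives
\[
-\mu'(t)\cdot \int_{\{v=t\}} K^{-1}(f)\, d\mathcal{H}^{n-1} \;\geq\; \bigl(n\omega_n^{1/n}\bigr)^{2}\,\mu(t)^{2(n-1)/n}.
\]

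For the radial competitor $\overline{v}$ the same chain becomes an identity: on the spheres $\{\overline{v}=t\}$ the modulus $|\nabla \overline{v}|$ is constant and equal to $K^{-1}(f_{*}(\overline{\mu}(t)))$, so that an explicit computation gives
\[
\int_{\{\overline{v}=t\}} K^{-1}(f_{\sharp})\, d\mathcal{H}^{n-1} \;=\; n\omega_n^{1/n}\,\overline{\mu}(t)^{(n-1)/n}\,K^{-1}\!\bigl(f_{*}(\overline{\mu}(t))\bigr),
\]
and $\overline{\mu}$ in turn satisfies the ODE $-\overline{\mu}'(t) = n\omega_n^{1/n}\overline{\mu}(t)^{(n-1)/n}/K^{-1}(f_{*}(\overline{\mu}(t)))$. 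The central step is then to control the level-set integral $\int_{\{v=t\}} K^{-1}(f)\, d\mathcal{H}^{n-1}$ in terms of $\mu(t)$ and the rearrangement $f_{*}$. My plan for this is to rewrite it via coarea as $-\tfrac{d}{dt}\int_{\{v>t\}} K^{-1}(f)|\nabla v|\, dx$, to use again $|\nabla v|\leq K^{-1}(f)$, and to invoke a Hardy--Littlewood-type rearrangement on the super-level set $\{v>t\}$; the appearance of the \emph{increasing} rearrangement $f_{*}$ (rather than $f^{*}$) in the statement is precisely what makes this transition consistent with the radial structure of $\overline{v}$, whose large-gradient region sits near $\partial\Omega^{\sharp}$.

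Once the differential inequality on $\mu$ has been brought into the same form as the ODE for $\overline{\mu}$, a standard Gronwall-type comparison, together with the initial condition $\mu(0)\leq|\Omega|=\overline{\mu}(0)$, yields $\mu(t)\leq \overline{\mu}(t)$ for every $t\geq 0$, and integration in $t$ gives
\[
\|v\|_{L^{1}(\Omega)} \;=\; \int_{0}^{+\infty}\mu(t)\, dt \;\leq\; \int_{0}^{+\infty}\overline{\mu}(t)\, dt \;=\; \|\overline{v}\|_{L^{1}(\Omega^{\sharp})}.
\]
I expect the hard part to be precisely the rearrangement step just described: a naive pointwise bound of $\int_{\{v=t\}}K^{-1}(f)\, d\mathcal{H}^{n-1}$ by a one-variable function of $\mu(t)$ cannot hold in general, since the values of $f$ on the level sets of $v$ are not directly controlled by $f^{*}$ or $f_{*}$. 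One therefore has to exploit the integrated, coarea-based reformulation above in order to turn a level-set quantity into a super-level-set quantity and only then compare distribution functions.
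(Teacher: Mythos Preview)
First, a contextual note: this theorem is not proved in the paper---it is quoted from \cite{GN}. The paper does, however, reproduce the essential mechanism in the proof of Theorem~\ref{Teorema_con_f}, where the key pointwise estimate \cite[eq.~(2.7)]{GN}
\[
v^{*}(s)\;\leq\;\frac{1}{n\omega_n^{1/n}}\int_{s}^{\abs{\Omega}}\frac{F(t)}{t^{1-1/n}}\,dt,
\]
with $F$ a pseudo-rearrangement of $\abs{\nabla v}$ along the level sets of $v$, is invoked and then integrated in $s$.

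Your strategy has a genuine gap, and in fact the intermediate goal you set is too strong. You aim for $\mu(t)\leq\overline{\mu}(t)$ for every $t$, which is the same as $v^{*}(s)\leq\overline{v}^{*}(s)$ pointwise; that would immediately give $\norma{v}_{L^{q}}\leq\norma{\overline{v}}_{L^{q}}$ for \emph{all} $q\geq 1$, not only $q=1$. This is known to fail: as the paper recalls right after stating the theorem, for $q\in(1,\infty)$ the $L^{q}$ comparison requires a \emph{different} rearrangement of the gradient (cf.\ the discussion of \cite{ALT} and \cite{Cia}). So no Gronwall-type argument can close, because its conclusion $\mu\leq\overline{\mu}$ is simply false in general; the obstacle you flag in bounding $\int_{\{v=t\}}K^{-1}(f)\,d\mathcal{H}^{n-1}$ pointwise by $K^{-1}(f_{*}(\mu(t)))$ is not a technicality but the obstruction itself.

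The argument that actually works bypasses the level-by-level comparison. One combines coarea and isoperimetry to obtain the inequality displayed above for $v^{*}(s)$, while for the radial competitor one has the explicit formula $\overline{v}^{*}(s)=\frac{1}{n\omega_n^{1/n}}\int_{s}^{\abs{\Omega}}(K^{-1}\circ f)_{*}(t)\,t^{1/n-1}\,dt$. Integrating both in $s$ over $[0,\abs{\Omega}]$ and applying Fubini reduces the $L^{1}$ comparison to the single inequality
\[
\int_{0}^{\abs{\Omega}}F(t)\,t^{1/n}\,dt\;\leq\;\int_{0}^{\abs{\Omega}}(K^{-1}\circ f)_{*}(t)\,t^{1/n}\,dt,
\]
and \emph{this} follows from Hardy--Littlewood (the weight $t^{1/n}$ is increasing, so pairing with the increasing rearrangement maximizes), together with Lemma~\ref{lemma_Alvino_Trombetti} to approximate $F$ by functions equi-distributed with $\abs{\nabla v}\leq K^{-1}(f)$. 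The crucial point is that the rearrangement step succeeds only \emph{after} the integration in $s$; it cannot hold at each level $t$.
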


They give also a similar result for the spherically symmetric decreasing rearrangement of the gradient, with an $L^\infty$ comparison.

In recent decades, many authors studied this kind of problems, in particular in \cite{ALT} Alvino, Lions and Trombetti proved the existence of a spherically symmetric rearrangement of the gradient of $v$ which gives a $L^q$ comparison as in \eqref{eq_Giarrusso_Nunziante} for a fixed $q$.

Moreover, Cianchi in \cite{Cia} gives a characterization of such rearrangement; clearly, the rearrangement found by Cianchi is different both from the spherically symmetric increasing and decreasing rearrangement if $q \in (1, \infty)$.

Furthermore, in \cite{Ferone_Posteraro} and \cite{Ferone_Posteraro_Volpicelli} the authors studied the optimization of the norm of a Sobolev function in the class of functions with fixed rearrangement of the gradient.

Incidentally, let us mention that the case where the $L^{q, 1}$ Lorentz norm, see Section \ref{Section_2} for its definition, takes the place of the $L^q$ norm in \eqref{eq_Giarrusso_Nunziante} has been studied in \cite{Ta6}. In particular, he stated the following
\begin{teorema}
    Let $u$ be a real-valued function defined in $\R^n$. Suppose $u$ is nice enough - e.g. Lipschitz continuous -  and the support of $u$ has finite measure. Let $M$ and $V$ denote the distribution function of $\abs{\nabla u}$ and the measure of the support of $u$, respectively.
    
    Let $v$ the real-valued function defined in $\R^n$ that satisfies the following conditions:
    \begin{enumerate}
        \item $\abs{\nabla v}$ is a rearrangement of $\abs{\nabla u}$;
        \item the support of $v$ has the same measure of the support of $u$;
        \item $v$ is radially decreasing and $\abs{\nabla v}$ is radially increasing.
    \end{enumerate}
    Then 
    \[
    \norma{u}_{L^{p,1}(\Omega)} \leq \norma{v}_{L^{p,1}(\Omega^{\sharp})} \quad \text{ if } n=1 \text{ or } 0 < p \leq \frac{n}{n-1},
    \]
    furthermore
    \[
    \norma{v}_{L^{p,1}(\Omega^{\sharp})} = \frac{p^2}{ \omega_n^{\frac{1}{n}}(n+p)} \int_0^\infty \left[V^{\frac{1}{p} + \frac{1}{n}} - (V-M(t))^{\frac{1}{p} + \frac{1}{n}}\right]\, dt.
    \]
\end{teorema}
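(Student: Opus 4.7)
The plan is to combine the standard trio of coarea formula, isoperimetric inequality, and Hardy--Littlewood rearrangement inequality. The hypothesis $p \leq n/(n-1)$ (automatic in dimension one, since then $(n-1)/n = 0$) will enter the argument as a non-negativity condition that lets us multiply an application of the isoperimetric inequality by a power of $\mu_u$ without reversing its direction.

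The starting point is the identity $\|u\|_{L^{p,1}(\Omega)} = p\int_0^\infty \mu_u(t)^{1/p}\,dt$, obtained from the definition of the Lorentz norm via Fubini. Set $\alpha := 1/p - (n-1)/n$, which is non-negative under our hypothesis, and factor $\mu_u(t)^{1/p} = \mu_u(t)^\alpha\,\mu_u(t)^{(n-1)/n}$. The isoperimetric inequality $n\omega_n^{1/n}\mu_u(t)^{(n-1)/n} \leq P(\{|u|>t\})$ then upgrades to
\[
\mu_u(t)^{1/p} \;\leq\; \frac{\mu_u(t)^\alpha\,P(\{|u|>t\})}{n\omega_n^{1/n}}.
\]
Integrating in $t$ and recognising the right-hand side as a coarea integral with $\phi(t) = \mu_u(t)^\alpha$ gives
\[
\int_0^\infty \mu_u(t)^{1/p}\,dt \;\leq\; \frac{1}{n\omega_n^{1/n}}\int_\Omega \mu_u(|u(x)|)^\alpha\,|\nabla u(x)|\,dx.
\]

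Next, I apply Hardy--Littlewood. A short calculation shows that the decreasing rearrangement of $x \mapsto \mu_u(|u(x)|)^\alpha$ on the support (of measure $V$) is the explicit function $\sigma \mapsto (V-\sigma)^\alpha$: since $\mu_u$ is decreasing, $\mu_u(|u(x)|) > \tau$ is equivalent to $|u(x)| < \mu_u^{-1}(\tau)$, which occurs on a set of measure $V - \tau$. Therefore
\[
\int_\Omega \mu_u(|u|)^\alpha |\nabla u|\,dx \;\leq\; \int_0^V (V-\sigma)^\alpha |\nabla u|^\ast(\sigma)\,d\sigma \;=\; \int_0^V \tau^\alpha |\nabla u|_\ast(\tau)\,d\tau,
\]
after the substitution $\sigma \mapsto V - \sigma$. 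Multiplying the chain of inequalities by $p$ produces the bound
\[
\|u\|_{L^{p,1}(\Omega)} \;\leq\; \frac{p}{n\omega_n^{1/n}}\int_0^V \tau^{1/p+1/n-1}\,|\nabla u|_\ast(\tau)\,d\tau.
\]

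A direct computation shows that the right-hand side is exactly $\|v\|_{L^{p,1}(\Omega^\sharp)}$. Writing $v(x) = \int_{|x|}^R g(r)\,dr$ with $g$ the increasing radial profile of $|\nabla v|$, and using $|\nabla v|_\ast = |\nabla u|_\ast$ from assumption~(1), the change of variables $\sigma = \omega_n r^n$ yields $v^\ast(s) = \tfrac{1}{n\omega_n^{1/n}} \int_s^V \sigma^{-(n-1)/n}|\nabla u|_\ast(\sigma)\,d\sigma$; plugging into $\|v\|_{L^{p,1}} = \int_0^V v^\ast(s)\,s^{1/p-1}\,ds$ and swapping orders of integration reproduces the upper bound above. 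For the explicit formula in the statement, I would express $|\nabla u|_\ast$ through its layer-cake representation $|\nabla u|_\ast(\tau) = \int_0^\infty \chi_{\{\tau > V-M(t)\}}\,dt$ and apply Fubini to obtain
\[
\int_0^V \tau^{\gamma-1}|\nabla u|_\ast(\tau)\,d\tau \;=\; \frac{1}{\gamma}\int_0^\infty \bigl[V^\gamma - (V-M(t))^\gamma\bigr]\,dt, \qquad \gamma = \tfrac{1}{p}+\tfrac{1}{n},
\]
so that $\|v\|_{L^{p,1}} = \tfrac{p}{n\gamma\,\omega_n^{1/n}} \int_0^\infty[V^\gamma-(V-M(t))^\gamma]\,dt$; the algebraic identity $p/(n\gamma) = p^2/(n+p)$ recovers the prefactor announced in the statement. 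The main obstacle is purely algebraic: one must see that the only place the hypothesis enters is the non-negativity of $\alpha = 1/p - (n-1)/n$, which multiplied into the isoperimetric inequality preserves its direction. In dimension one this becomes $\alpha = 1/p \geq 0$, so the argument works for every $p > 0$.
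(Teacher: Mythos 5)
The paper does not prove this statement: it is quoted verbatim as a known result of Talenti (\cite{Ta6}), so there is no in-paper argument to compare against. Your reconstruction is correct and is in fact the classical proof of this inequality: the chain ``layer-cake identity $\norma{u}_{L^{p,1}}=p\int_0^\infty \mu_u(t)^{1/p}\,dt$, then isoperimetric inequality applied to the factor $\mu_u(t)^{(n-1)/n}$ (the hypothesis $p\le n/(n-1)$, i.e. $\alpha=1/p-(n-1)/n\ge 0$, entering only to keep the remaining factor $\mu_u(t)^\alpha$ a legitimate nonnegative weight), then coarea, then Hardy--Littlewood'' is exactly Talenti's route, and your computation of $v^*$ and of the prefactor $p/(n\gamma)=p^2/(n+p)$ checks out. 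Two small points worth tightening: (i) when $u$ has level sets of positive measure, the decreasing rearrangement of $x\mapsto\mu_u(\abs{u(x)})$ is only bounded above by $V-\sigma$ rather than equal to it, but that is the direction you need after Hardy--Littlewood; (ii) in the coarea step one should use that $P(\{\abs{u}>t\})\le \mathcal{H}^{n-1}(\{\abs{u}=t\})$ for a.e.\ $t$, so the isoperimetric bound still chains correctly into the coarea integral. Neither affects the validity of the argument.
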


On the other hand, the problem of studying the rearrangement of the Laplacian has been widely studied by several authors. The bibliography is extensive; for the sake of completeness, let us recall some of the works: \cite{Ta5} for the Dirichlet boundary conditions,  \cite{ACNT_sharp_estimates,ANT,AGM} for the Robin conditions.

As we already said, we focus on the case in which the functions do not vanish on the boundary. Our main theorem is the following:

\begin{teorema}
    \label{Teorema_che_scriveremo}
    Let $\Omega \subset \R^n$ be a bounded, open and Lipschitz set and let $u \in W^{1,p}(\Omega)$ be a non-negative function. If we denote with $\Omega^{\sharp}$ the ball centered at the origin with same measure as $\Omega$, then there exists a non-negative function $u^{\asso} \in W^{1,p}(\Omega^{\sharp})$ that satisfies
    \begin{equation}
        \label{eq_che_risolve_u_picche}
        \begin{cases}
            \lvert \nabla u^{\asso} \rvert = \abs{\nabla u}_{\sharp}(x) & \text{a.e. in }\Omega^{\sharp} \\[1ex]
            u^\asso = \cfrac{\displaystyle{ \int_{\partial \Omega}  u \, d \mathcal{H}^{n-1}} }{\displaystyle{ \lvert \partial \Omega^{\sharp} \rvert }} &\text{ on } \partial \Omega^{\sharp}.
        \end{cases}
    \end{equation}
    and such that
    \begin{align}
        \label{norma_L1}
        \norma{u}_{L^1(\Omega)} \leq \norma{u^{\asso}}_{L^1(\Omega^{\sharp})}.
    \end{align}
\end{teorema}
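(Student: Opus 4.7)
The natural candidate is the radially non-increasing function
\[
    u^{\asso}(x) \;:=\; c + \int_{|x|}^{R} |\nabla u|_{\sharp}(\rho)\,d\rho
    \;=\; c + \frac{1}{n\omega_n^{1/n}}\int_{\omega_n|x|^n}^{|\Omega|}\frac{|\nabla u|_{\ast}(s)}{s^{(n-1)/n}}\,ds,
\]
where $R=(|\Omega|/\omega_n)^{1/n}$ is the radius of $\Omega^{\sharp}$ and $c = |\partial\Omega^{\sharp}|^{-1}\int_{\partial\Omega} u\,d\mathcal{H}^{n-1}$. By construction $|\nabla u^{\asso}| = |\nabla u|_{\sharp}$ almost everywhere and $u^{\asso}\equiv c$ on $\partial\Omega^{\sharp}$, so \eqref{eq_che_risolve_u_picche} holds; since $|\nabla u|_{\sharp}$ is equi-rearranged to $|\nabla u|$, one gets $u^{\asso} \in W^{1,p}(\Omega^{\sharp})$. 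A Fubini computation in polar coordinates, using $|\{x\in\Omega^{\sharp}:\omega_n|x|^n<s\}|=s$, then yields the closed form
\[
    \norma{u^{\asso}}_{L^1(\Omega^{\sharp})} \;=\; c\,|\Omega| \;+\; \frac{1}{n\omega_n^{1/n}}\int_0^{|\Omega|} s^{1/n}\,|\nabla u|_{\ast}(s)\,ds.
\]

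For the bound \eqref{norma_L1}, the plan is to introduce the distribution functions $\mu(t):=|\{u>t\}|$ and $\varphi(t):=\mathcal{H}^{n-1}(\{u|_{\partial\Omega}>t\})$, so that by layer-cake $\norma{u}_{L^1(\Omega)}=\int_0^\infty\mu(t)\,dt$ and $\int_{\partial\Omega} u\,d\mathcal{H}^{n-1}=\int_0^\infty\varphi(t)\,dt$. Applying the Euclidean isoperimetric inequality to $\{u>t\}$, whose full perimeter decomposes as $\mathcal{H}^{n-1}(\{u=t\}\cap\Omega)+\varphi(t)$, gives
\[
    \mathcal{H}^{n-1}(\{u=t\}\cap\Omega)+\varphi(t) \;\geq\; n\omega_n^{1/n}\,\mu(t)^{(n-1)/n}.
\]
Coupling this with Cauchy--Schwarz on the level set,
\[
    \mathcal{H}^{n-1}(\{u=t\})^2 \;\leq\; \left(\int_{\{u=t\}}\!\frac{d\mathcal{H}^{n-1}}{|\nabla u|}\right)\!\left(\int_{\{u=t\}}\!|\nabla u|\,d\mathcal{H}^{n-1}\right),
\]
together with the coarea identity $-\mu'(t)=\int_{\{u=t\}}|\nabla u|^{-1}\,d\mathcal{H}^{n-1}$, yields the differential inequality
\[
    \bigl(n\omega_n^{1/n}\,\mu(t)^{(n-1)/n}-\varphi(t)\bigr)_{+}^2 \;\leq\; \bigl(-\mu'(t)\bigr)\int_{\{u=t\}}|\nabla u|\,d\mathcal{H}^{n-1}.
\]
A pseudo-rearrangement estimate in the spirit of \cite{ALT} then controls the level-set integral $\int_{\{u=t\}}|\nabla u|\,d\mathcal{H}^{n-1}$ by a quantity depending only on $\mu(t)$ and on $|\nabla u|_{\ast}$. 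Integrating over $t$ and comparing with the corresponding equality saturated by the radial profile $u^{\asso}$ should produce \eqref{norma_L1}.

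The hard part is the boundary trace term $\varphi(t)$. At the levels where $\varphi(t)\geq n\omega_n^{1/n}\mu(t)^{(n-1)/n}$ -- typically those $t$ lying below the essential supremum of $u|_{\partial\Omega}$ -- the isoperimetric bound becomes vacuous and the differential inequality degenerates, so the small-$t$ and large-$t$ regimes must be treated separately and matched in such a way that the constant $c$ appears with the sharp coefficient $u^{\asso}(R)=c$. I also expect that the estimate cannot be upgraded to the naive pointwise comparison $u^{\ast}(m)\leq u^{\ast}_{\asso}(m)$: a highly concentrated interior spike makes $u^{\ast}(0)=\esssup u$ much larger than $u^{\asso}(0)$, while the $L^1$ comparison is still saved because $u^{\ast}$ drops off rapidly whereas $u^{\ast}_{\asso}$ is spread over the whole ball. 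The estimate must therefore be kept in its integrated (distribution-function) form throughout.
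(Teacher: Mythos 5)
Your candidate $u^{\asso}$ is the right one (it coincides with the profile the paper constructs), and the verification of \eqref{eq_che_risolve_u_picche} from its explicit formula is fine. The problem is that \eqref{norma_L1} is never actually proved: everything from ``the plan is to introduce\dots'' onward is a programme rather than an argument, and the step you yourself flag as ``the hard part'' --- converting the boundary distribution function $\varphi(t)=\mathcal{H}^{n-1}(\{u|_{\partial\Omega}>t\})$ into the single averaged constant $c=\lvert\partial\Omega^{\sharp}\rvert^{-1}\int_{\partial\Omega}u\,d\mathcal{H}^{n-1}$ --- is exactly where the theorem lives. On the set of levels where $\varphi(t)\ge n\omega_n^{1/n}\mu(t)^{(n-1)/n}$ your differential inequality is vacuous and gives no control of $-\mu'(t)$; moreover, integrating the Talenti-type pointwise estimate from $s$ to $\abs{\Omega}$ produces the constant of integration $u^{\ast}(\abs{\Omega})=\essinf_{\Omega}u$, which has no relation to $c$. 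No matching of the degenerate and non-degenerate regimes is exhibited, and the pseudo-rearrangement control of $\int_{\{u=t\}}\abs{\nabla u}\,d\mathcal{H}^{n-1}$ is only invoked, not stated or proved. As written, the proof of \eqref{norma_L1} is a genuine gap, not a routine omission.

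The paper sidesteps this difficulty entirely by a different mechanism, which you may want to adopt: it extends $u$ to a collar $\Sigma_{\eps}=\Omega_{\eps}\setminus\Omega$ by $u_{\eps}(x)=u(p(x))\bigl(1-d(x,\partial\Omega)/\eps\bigr)$, so that $u_{\eps}$ vanishes on $\partial\Omega_{\eps}$ and the classical zero-trace comparison (Theorem \ref{Giarrusso_Nunziante}) applies verbatim on $\Omega_{\eps}$; the boundary integral $\int_{\partial\Omega}u\,d\mathcal{H}^{n-1}$ then emerges as the limit of the gradient mass $\int_{\Sigma_{\eps}}\abs{\nabla u_{\eps}}\,dx$ stored in the collar, and the radial comparison functions $v_{\eps}$ converge to a limit whose trace is exactly $c$. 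This reduction replaces the delicate level-set analysis of the boundary term by a soft compactness argument, at the price of several approximation steps (smooth $\Omega$, $u\in W^{1,\infty}$, $u\ge\sigma>0$, then their removal) which your sketch also omits but which are needed to justify the coarea manipulations for a general $W^{1,p}$ function on a Lipschitz domain. If you insist on the direct route, you must prove a Giarrusso--Nunziante estimate with the boundary term built in, which is essentially a new theorem.
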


\begin{oss}
\label{remark_holder}
    By the explicit expression of $u^*$ on the boundary and the H\"{o}lder inequality, we can estimate the $L^p$ norm of the trace:
    \begin{equation}
        \label{norma_traccia_Lp}
        \lvert \partial \Omega^{\sharp} \rvert^{p-1} \int_{\partial \Omega^{\sharp}} (u^{\asso})^p \, d\mathcal{H}^{n-1} = \left(\int_{\partial \Omega} u \, d\mathcal{H}^{n-1}\right)^p \leq \abs{\partial \Omega}^{p-1}\int_{\partial \Omega} u^p \, d\mathcal{H}^{n-1} \qquad \forall p \geq 1.
    \end{equation}
\end{oss}

This result allows us to compare solutions to PDE with Robin boundary conditions with solutions to their symmetrized.

Precisely we are able to compare solutions to
\begin{equation}
    \label{eq_soluzione_debole_Omega_intro}
    \begin{cases} 
    -\Delta u = 1 &\text{in } \Omega \\[1ex]
    \displaystyle{\parzder{u}{\nu} + \beta \abs{\partial \Omega } \, u = 0} &\text{on } \partial \Omega
    \end{cases}
\end{equation}
with the solution to
\begin{equation}
    \label{eq_soluzione_debole_Omega_sharp_intro}
    \begin{cases} 
    -\Delta v = 1 &\text{in } \Omega^{\sharp} \\[1ex]
    \displaystyle{\parzder{v}{\nu} + \beta \lvert \partial \Omega^{\sharp} \rvert \, v = 0} &\text{on } \partial \Omega^{\sharp}
    \end{cases}
\end{equation}
In particular we get
\begin{corollario}
    \label{corollario_torsione_pesata}
    Let $\beta>0,$ let $\Omega \subset \R^n$ be a bounded, open and Lipschitz set. If we denote with $\Omega^{\sharp}$ the ball centered at the origin with same measure as $\Omega$, it holds
    \begin{equation}
        T(\Omega,\beta) \geq T(\Omega^{\sharp},\beta) 
    \end{equation}
\end{corollario}
where
\begin{equation}
   T(\Omega,\beta)  = \inf_{w \in W^{1,2}(\Omega)} \cfrac{ \displaystyle{  \int_{\Omega} \abs{\nabla w}^2 \, dx + \beta \abs{\partial \Omega}  \, \int_{\partial \Omega} w^2 \, d\mathcal{H}^{n-1} } }{\displaystyle{ \biggl(\int_{\Omega} w \, dx \biggr)^2 }} \qquad \text{for } w \in W^{1,2}(\Omega).
\end{equation}

The paper is organized as follows. In Section \ref{Section_2} we recall some basic notions, definitions and
classical results and we prove Theorem \ref{Teorema_che_scriveremo}. Eventually, Section \ref{Section_4} is dedicated to the application to the Robin torsional rigidity and in Section \ref{Section_5} we get a comparison between Lorentz norm of $u$ and $u^{\asso}$.

\section{Notations, Preliminaries and proof of the main result}
\label{Section_2}
Observe that obviously $\forall p \geq 1$

\[
    \displaystyle{\norma{f}_{L^p(\Omega)}=\norma{f^*}_{L^p([0, \abs{\Omega}])}=\lVert{f^\sharp}\rVert_{L^p(\Omega^\sharp)}=\norma{f_*}_{L^p([0, \abs{\Omega}])}=\lVert{f_\sharp}\rVert_{L^p(\Omega^\sharp)}},
\]
moreover, the Hardy-Littlewood inequalities hold true:
\begin{equation*}
    \int_{\Omega} \abs{f(x)g(x)} \, dx \le \int_{0}^{\abs{\Omega}} f^*(s) g^*(s) \, ds= \int_{\Omega^\sharp} f^\sharp(x) g^\sharp(x) \, dx,
\end{equation*}
\begin{equation*}
    \int_{\Omega^\sharp} f^\sharp(x) g_\sharp(x) \, dx=\int_{0}^{\abs{\Omega}} f^*(s) g_*(s) \, ds \leq \int_{\Omega} \abs{f(x)g(x)} \, dx .
\end{equation*}
Finally, the operator which assigns to a function its symmetric decreasing rearrangement is a contraction in $L^p$ , see (\cite{CP}), i.e.
\begin{equation}
    \label{eq_riarr_diminuiscono_distanza_Lp}
    \norma{f^*-g^*}_{L^p([0,\abs{\Omega}])} \leq \norma{f-g}_{L^p(\Omega)}
\end{equation}
One can find more results and details about rearrangements for instance in \cite{HLP} and in \cite{Ta6}.

Other powerful tools are the pseudo-rearrangements. Let $u \in W^{1,p}(\Omega)$ and let $f \in L^1(\Omega)$, as in \cite{AT} $\forall s \in [0,\abs{\Omega}]$, there exists a subset $D(s) \subseteq \Omega$ such that 
\begin{enumerate}

    \item $\abs{D(s)}=s$;
    
    \item $D(s_1) \subseteq D(s_2)$ if $s_1<s_2$;
    
    \item $D(s) = \Set{x \in \Omega \, | \, \abs{u(x)}>t}$ if $s=\mu(t)$.

\end{enumerate} 

So the function
\[
\int_{D(s)}f(x) \, dx
\]
is absolutely continuous, therefore it exists a function $F$ such that
\begin{equation}
\label{assol_cont}
    \int_0^s F(t) \, dt = \int_{D(s)} f(x) \, dx
\end{equation}
We will use the following propriety (\cite[Lemma 2.2]{AT})
\begin{lemma}
    \label{lemma_Alvino_Trombetti}
    Let $f \in L^p$ for $p>1$ and let $D(s)$ be a family described above. If $F$ is defined as in \eqref{assol_cont}, then there exists a sequence $\Set{ F_k }$ such that $F_k$ has the same rearrangement as $f$ and
    \[
    F_k \convdeb F \qquad \text{in } L^p([0,\abs{\Omega}])
    \]
    If $f \in L^1$ it follows that
    \[
    \lim_k \int_0^{\abs{\Omega}} F_k(s) g(s) \, ds = \int_0^{\abs{\Omega}} F(s) g(s) \, ds
    \]
    for each function $g \in BV([0,\abs{\Omega}])$.
\end{lemma}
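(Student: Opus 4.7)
The plan is to construct $F_k$ by ``discretizing'' the family $D(s)$ and rearranging $f$ piece-by-piece. Fix $k$ and set $s_j = j \abs{\Omega}/k$, $I_j^k = [s_{j-1}, s_j]$ for $j = 1, \dots, k$; correspondingly partition $\Omega$ into the measurable sets $D_j^k = D(s_j) \setminus D(s_{j-1})$, each of measure $\abs{\Omega}/k$ by property (1) of the family. On each interval $I_j^k$ define $F_k$ to be the decreasing rearrangement of $f\big|_{D_j^k}$ (translated onto $I_j^k$). Since the $D_j^k$ partition $\Omega$ and the $I_j^k$ partition $[0, \abs{\Omega}]$, the distribution function of $F_k$ is the sum of the distribution functions of $f\big|_{D_j^k}$, hence equals $\mu_f$; this gives property that $F_k$ is a rearrangement of $f$ and, in particular, $\norma{F_k}_{L^p} = \norma{f}_{L^p}$ for every $k$.

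The crucial feature of the construction is that at grid points it matches $F$ exactly:
\[
\int_0^{s_j} F_k(t)\,dt = \int_{D_j^k \cup \dots \cup D_1^k} f\,dx = \int_{D(s_j)} f\,dx = \int_0^{s_j} F(t)\,dt.
\]
Given any $a \in [0,\abs{\Omega}]$, choose $j_k$ with $s_{j_k} \leq a < s_{j_k+1}$. Writing $\int_0^a F_k = \int_0^{s_{j_k}} F_k + \int_{s_{j_k}}^a F_k$, the first piece equals $\int_0^{s_{j_k}} F$ and tends to $\int_0^a F$ by absolute continuity of the map $s \mapsto \int_{D(s)} f$, while Hölder's inequality bounds the remainder by $(\abs{\Omega}/k)^{1/p'} \norma{f}_{L^p} \to 0$ (here $p > 1$ is essential). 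Thus $\int F_k \chi_{[0,a]} \to \int F \chi_{[0,a]}$ for every $a$, and by linearity the convergence extends to step functions. Since $\{F_k\}$ is bounded in $L^p$ and step functions are dense in $L^{p'}$, a standard $3\eps$/density argument yields $F_k \convdeb F$ in $L^p([0,\abs{\Omega}])$.

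For the $L^1$ statement the weak convergence argument is unavailable (test functions live in $L^\infty$, and we need only $g \in BV$). I would integrate by parts in the Stieltjes sense: setting $G_k(s) = \int_0^s F_k$ and $G(s) = \int_0^s F$,
\[
\int_0^{\abs{\Omega}} F_k(s) g(s)\,ds = G_k(\abs{\Omega})\, g(\abs{\Omega}^-) - \int_{[0,\abs{\Omega}]} G_k(s)\,dg(s),
\]
and the same identity with $G$ in place of $G_k$. The boundary term is already in the equality $G_k(\abs{\Omega}) = \int_\Omega f = G(\abs{\Omega})$. For the Stieltjes integral, the previous paragraph shows $G_k(s) \to G(s)$ pointwise, while $\abs{G_k(s)} \leq \norma{F_k}_{L^1} = \norma{f}_{L^1}$ uniformly in $k, s$; since the total variation $\abs{dg}$ is a finite measure on $[0,\abs{\Omega}]$, dominated convergence gives $\int G_k\,dg \to \int G\,dg$ and the claim follows.

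The step I expect to require the most care is the construction of the equimeasurable piecewise rearrangement: one must verify that each $D_j^k$ is measurable (inherited from the monotone, absolutely continuous family $D(\cdot)$) and that a measurable rearrangement of $f\big|_{D_j^k}$ onto $I_j^k$ exists, so that the $F_k$ are genuine measurable functions equimeasurable with $f$. Once this bookkeeping is in place, both the $L^p$ weak convergence and the $BV$ duality argument reduce to matching integrals on the grid plus a Hölder-type error estimate, as described above.
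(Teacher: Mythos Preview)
The paper does not prove this lemma at all: it is quoted verbatim from \cite[Lemma~2.2]{AT} and used as a black box. So there is no ``paper's proof'' to compare against; what you have written is essentially the original Alvino--Trombetti construction, and your outline is correct in structure.

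One genuine gap: in the $L^1$ case you write ``the previous paragraph shows $G_k(s)\to G(s)$ pointwise'', but the previous paragraph's remainder estimate was the H\"older bound $(\abs{\Omega}/k)^{1/p'}\norma{f}_{L^p}$, which is vacuous when $p=1$. You need a separate argument here. The fix is immediate: since $F_k$ on $I_{j_k+1}^k$ is equimeasurable with $f$ on $D_{j_k+1}^k$,
\[
\Bigl|\int_{s_{j_k}}^{a} F_k\Bigr| \le \int_{I_{j_k+1}^k} \abs{F_k} = \int_{D_{j_k+1}^k} \abs{f},
\]
and the right-hand side tends to $0$ because $\lvert D_{j_k+1}^k\rvert = \abs{\Omega}/k \to 0$ and a single $L^1$ function is uniformly integrable. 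With this correction the dominated-convergence argument against $dg$ goes through as you wrote it. Everything else (equimeasurability of the piecewise rearrangement, matching of primitives at grid points, density of step functions in $L^{p'}$) is fine.
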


Moreover, for sake of completeness, we will recall the definition of the Lorentz norm.

\begin{definizione}
    \label{def_spazi_Lorentz}
    Let $\Omega \subseteq \R^n$ a measurable set, $0<p<+\infty$ and $0<q<+\infty$. Then a function $g$ belongs to the Lorentz space $L^{p,q}(\Omega)$ if 
    \begin{equation}
        \label{eq_norma_Lorentz}
        \norma{g}_{L^{p,q}(\Omega)} = \biggl( \int_0^{+\infty} \bigl[t^{\frac{1}{p}} g^*(t) \bigr]^q \frac{dt}{t} \biggr)^{\frac{1}{q}} < + \infty.
    \end{equation}
\end{definizione}
Let us notice that for $p=q$ the Lorentz space $L^{p,p}(\Omega)$ coincides with the Lebesgue space $L^p(\Omega)$ by the Cavalieri's principle.

Let us now prove the main  Theorem.

\begin{proof}[Proof of Theorem \ref{Teorema_che_scriveremo}]
Let us consider $\varepsilon$ and $\delta:=\delta_\varepsilon$ and the sets
\begin{equation}
    \label{def_Omega_eps}
    \begin{aligned}
        \Omega_{\varepsilon} &= \Set{x \in \R^n | d(x, \Omega) < \varepsilon} \qquad &  \Sigma_\varepsilon &= \Omega_\varepsilon \setminus \Omega,\\
        \Omega^{\sharp}_\varepsilon & = \Set{x \in \R^n | d(x, \Omega^{\sharp}) < \delta} \qquad &  \Sigma^\sharp_\varepsilon &= \Omega^\sharp_\varepsilon \setminus \Omega^\sharp,\\
        \abs{\Omega_\varepsilon} &=  \lvert \Omega^{\sharp}_\varepsilon \rvert \qquad & \abs{\Sigma_{\eps}} &= \lvert \Sigma_{\eps}^{\sharp} \rvert,
    \end{aligned}
\end{equation}
where, since $\abs{\Sigma_{\eps}}/\varepsilon \to\abs{\partial\Omega} $ and $ \lvert \Sigma_{\eps}^{\sharp}\rvert/ \delta \to \lvert \partial\Omega^\sharp \rvert$ as $\varepsilon \to 0$, we have
\[
\lim_{\varepsilon \to 0} \, \frac{\delta}{\varepsilon} = \frac{\abs{\partial \Omega}}{\abs{\partial \Omega^\sharp}}.
\]
Let $d(\cdot, \Omega)$ defined as follows: 
$$d(x,\Omega):=\inf_{y\in \Omega}\abs{x-y}.$$
Then we divide the proof into four steps.
    
    \begin{enumerate}
        
        \item[\textbf{Step 1}] First of all we assume $\Omega $ with $ C^{1,\alpha}$ boundary, $u \in W^{1,\infty}(\Omega)$ and $u \geq \sigma >0$ in $\Omega$.
        
        So we can consider the following "linear" extension of $u$, $u_\varepsilon$ in $\Omega_\varepsilon$
        \[
        u_{\eps}(x) = u \bigl( p(x) \bigr) \biggl( 1-\frac{d(x,\partial \Omega)}{\eps} \biggr) \qquad \forall x \in \Omega_\varepsilon\setminus\Omega,
        \]
        where $p(x)$ is the projection of $x$ on $\partial\Omega$ (for $\varepsilon$ sufficiently small, this definition is well posed since $\Omega$ is smooth, see \cite{GT}). The function $u_\varepsilon$, has the following properties:
    
        \begin{enumerate}
            \item $\displaystyle{u_\varepsilon \rvert_{\Omega} = u}$,
            
            \item $\displaystyle{u_\varepsilon=0}$ on $\partial \Omega_\varepsilon$,
            
            \item\label{pro-gra_1.1} $\displaystyle{\norma{\nabla u_\varepsilon}_{L^{\infty}(\Omega)} \leq \abs{\nabla u_\varepsilon}(y)} $ $\forall y \in \Sigma_\varepsilon$ for $\varepsilon$ sufficiently small,
            
            \item $\displaystyle{ \lim_{\varepsilon \to 0^+} \int_{\Sigma_\varepsilon} \abs{\nabla u_\varepsilon}\, dx = \int_{\partial\Omega} u \, d\mathcal{H}^{n-1}.}$
        \end{enumerate}
        Properties $(a)$ and $(b)$ follow immediately by the definition of $u_{\eps}$, while $(c)$ is a consequence of the regularity of $u$. Property $(d)$ can be obtained by an easy calculation, indeed
        \[
        \nabla u_{\eps} (x) = \nabla \bigl( u(p(x)) \bigr) \biggl[ 1-\frac{d(x,\partial \Omega)}{\eps} \biggr] - u \bigl( p(x) \bigr) \frac{\nabla d(x,\partial \Omega)}{\eps}
        \]
        For the first term, we can notice
        \[
        \int_{\Sigma_{\eps}} \bigl \lvert \nabla \bigl(u (p(x)) \bigr) \bigr \rvert \biggl[ 1-\frac{d(x,\partial \Omega)}{\eps} \biggr] \, dx  \leq L \int_{\Sigma_{\eps}} \, dx = L \abs{\Sigma_{\eps}}
        \]
        where $L$ is the $L^{\infty}$ norm of $\nabla u(p(x))$. Now we deal with the second term and, keeping in mind that $\abs{\nabla d} = 1$ and using coarea formula, we have
        \begin{align*}
            \lim_{\eps \to 0^+} \int_{\Sigma_{\eps}} \abs{\nabla u_{\eps}} \, dx & = \lim_{\eps \to 0^+} \frac{1}{\eps} \int_{\Sigma_{\eps}} u(p(x)) \, dx  = \lim_{\eps \to 0^+} \int_0^{\eps} \, dt \int_{\Gamma_t} (u\circ p) \, d \mathcal{H}^{n-1}
        \end{align*}
        where $\Gamma_t = \Set{x \in \Sigma_{\eps} \, | \, d(x,\partial \Omega) = \eps}$. By continuity of $u$ and Lebesgue differentation theorem we get
        \[
        \lim_{\eps \to 0^+} \frac{1}{\eps} \int_0^{\eps} \, dt \int_{\Gamma_t} u\circ p \, d \mathcal{H}^{n-1} = \int_{\Gamma_0} (u \circ p) \, d\mathcal{H}^{n-1} = \int_{\partial \Omega} u \, d \mathcal{H}^{n-1}
        \]
        that proves property $(d)$.
        
        For every $\varepsilon>0$, we consider the following problem
        \begin{equation}
    	    \label{eps_gn}
    	    \begin{cases}
    	    	\abs{\nabla v_\varepsilon} (x) = \abs{\nabla u_\varepsilon}_{\sharp} (x) &\text{ in } \Omega_\varepsilon^{\sharp} \\
        		v_\varepsilon = 0 &\text{ on } \partial \Omega_\varepsilon^{\sharp}
    	    \end{cases}
        \end{equation}
        and by Theorem \ref{Giarrusso_Nunziante} it holds
        \begin{equation}
            \label{Giarrusso_Nunziante_con_u_eps}
    	    \norma{u_\varepsilon}_{L^1(\Omega_\varepsilon)} \leq  \norma{v_\varepsilon}_{L^1(\Omega^\sharp_\varepsilon)}.
        \end{equation}
        Moreover it exists $\bar{\varepsilon}$ such that for every $\varepsilon \leq \bar{\varepsilon}$
        \begin{equation}
            \label{gradienti_uguali_dentro}
        	\abs{\nabla v_\varepsilon} (x) = \abs{\nabla u_{\eps}}_{\sharp}(x) = \abs{\nabla u}_{\sharp} (x)  \qquad \forall x \in \Omega^\sharp.
        \end{equation}
        We can see $u_\varepsilon$ as a $W^{1,1}(\Omega_{\bar{\varepsilon}})$ function and we have
        \begin{equation}
            \label{gradiente_limitato}
            \begin{split}
            \int_{\Omega_{\bar{\varepsilon}}^\sharp}\abs{\nabla v_\varepsilon} =\int_{\Omega_{\bar{\varepsilon}}}\abs{\nabla u_\varepsilon} &=  \int_{\Omega}\abs{\nabla u} + \int_{\Sigma_\varepsilon}\abs{\nabla u_\varepsilon} 
            \leq \norma{\nabla u}_{L^1(\Omega)} + 2 \norma{u}_{L^1(\partial \Omega)}.
            \end{split}
        \end{equation}
        by property $(d)$.

        \noindent Finally, by Poincarè and \eqref{gradiente_limitato}, there exists a constant $0<C=C(n,\Omega)$ such that
        \begin{equation*}
            \norma{v_\varepsilon}_{W^{1,1}(\Omega_{\bar{\varepsilon}}^\sharp)} \leq C \norma{\nabla v_\varepsilon}_{L^{1}(\Omega_{\bar{\varepsilon})}} \leq  C(n,\Omega) \norma{u}_{W^{1,1}(\Omega)}.
        \end{equation*}

        Therefore, up to a subsequence, there exists a limit function $u^\asso \in BV(\Omega_{\bar{\varepsilon}}^\sharp)$ such that (\cite[Proposition 3.13]{AFP})
        \begin{equation*}
            v_\varepsilon \to u^\asso \text{ in } L^1(\Omega_{\bar{\varepsilon}}^\sharp) \qquad \nabla v_{\varepsilon} \overset{*}{\convdeb} \nabla u^{\asso} \text{ in } \Omega
        \end{equation*}
        namely
        \[
            \lim_{\eps \to 0} \int_{\Omega_{\bar{\varepsilon}}^{\sharp}} \varphi \, d \nabla v_{\eps}  = \int_{\Omega_{\bar{\varepsilon}}^{\sharp}} \varphi \, d \nabla u^{\asso} \qquad \forall \varphi \in C_0(\Omega,\R^n)
        \]
        Our aim is to show that $u^\asso$ satisfies properties \eqref{eq_che_risolve_u_picche}, \eqref{norma_L1} and \eqref{norma_traccia_Lp}.

        Concerning \eqref{eq_che_risolve_u_picche} then $\lvert \nabla u^{\asso} \rvert = \abs{\nabla u}_{\sharp}$ follows from \eqref{gradienti_uguali_dentro}.
        
        To find the value of $u^\star$ at the boundary, we observe that, from \eqref{eps_gn} and \eqref{gradienti_uguali_dentro}, we have 
        \begin{equation*}
            \int_{\Sigma_{\varepsilon}} \abs{\nabla u_\varepsilon} = \int_{\Sigma^\sharp_{\varepsilon}} \abs{\nabla v_\varepsilon}.
        \end{equation*}
        Now, for $t>0$ setting $\Gamma_t= \Set{d(x, \Omega) = t}$, $\Gamma_t^{\sharp} = \{d(x, \Omega^{\sharp}) = t \}$, $r=\displaystyle{\biggl( \frac{\abs{\Omega}}{\omega_n} \biggr)^{\frac{1}{n}}}$ and recalling that $v_{\eps}$ is radially symmetric we have
        \begin{gather*}
            \int_{\Sigma^\sharp_{\varepsilon}} \abs{\nabla v_\varepsilon} =  \int_r^{r+\delta} \int_{\Gamma^\sharp_t}  \abs{\nabla v_\varepsilon} \, d \mathcal{H}^{n-1} \, dt = \lvert \Gamma^\sharp_t \rvert \int_r^{r+\delta} - v'_\varepsilon \lvert \Gamma^\sharp_t \rvert \, dt = \lvert \Gamma^\sharp_t \rvert \, v_\varepsilon(r).
        \end{gather*}
        Therefore by monotonicity of $\lvert \Gamma_t^{\sharp} \rvert$ we have
        \[
        \lvert \Gamma^\sharp_r \rvert v_{\eps}(r) \leq \int_r^{r+\delta} \bigl( -v_{\eps}'(t) \lvert \Gamma_t^{\sharp} \rvert \bigr) \, dt \leq \lvert \Gamma_{r+\delta}^{\sharp} \rvert v_{\eps}(r)
        \]
        and since
        \[
        \lvert \Gamma^\sharp_r \rvert v_{\eps}(r) = \int_{\partial \Omega^{\sharp}} v_{\eps} \, d\mathcal{H}^{n-1}
        \]
        using the fact that $v_{\eps} \to v$ in $L^{1}(\Omega)$, $\nabla v_{\eps} = \nabla u$ in $\Omega$ and the continuity embedding of $W^{1,1}(\Omega)$ in $L^1(\Omega)$, in the end we have
        \[
        \int_{\Sigma^\sharp_{\varepsilon}} \abs{\nabla v_\varepsilon} \to \int_{\partial \Omega^{\sharp}} u^{\asso} \, d\mathcal{H}^{n-1}.
        \]
        Using property $(d)$ we obtain
        \begin{equation*}
            \int_{\partial \Omega}  u \, d \mathcal{H}^{n-1}= \int_{\partial \Omega^\sharp}   u^\asso  \, d \mathcal{H}^{n-1}.
        \end{equation*}
        In the end we have that for $u^\asso$ it holds
        \begin{equation}
            \begin{cases}
            \abs{\nabla u^\asso} = \abs{\nabla u}_{\sharp} &\text{ in } \Omega^{\sharp} \\
            u^\asso = \cfrac{\displaystyle{ \int_{\partial \Omega}  u \, d \mathcal{H}^{n-1}}}{\displaystyle{\lvert \partial \Omega^{\sharp} \rvert}} &\text{ on } \partial \Omega^{\sharp}.
            \end{cases}
        \end{equation}
        that proves \eqref{eq_che_risolve_u_picche}.
        
        Furthermore by
        \begin{equation*}
            \norma{u_\varepsilon}_{L^1(D)} \to \norma{u}_{L^1(D)} \quad \text{ and }  \quad \norma{v_\varepsilon}_{L^1(D^\sharp)} \to \lVert{u^\asso}\rVert_{L^1(D^\sharp)}.
        \end{equation*}
        we can pass to the limit $\varepsilon \to 0$ in \eqref{Giarrusso_Nunziante_con_u_eps} and we get

        \begin{equation*}
            \norma{u}_{L^1(\Omega)} \leq  \lVert{u^\asso}\rVert_{L^1(\Omega^\sharp)}.
        \end{equation*}
        that proves \eqref{norma_L1}.

        \item[\textbf{Step 2}]
        Now we remove the extra-assumption $u \geq \delta >0$ defining
        \begin{equation*}
            u_\sigma := u+ \sigma.
        \end{equation*}
    
        Then $u_{\sigma}$ is strictly positive in $\Omega$ and we can apply the previous result: there exists a function $v_\sigma$ in $\Omega^\sharp$ such that
        \begin{equation*}
            \begin{cases}
                \abs{\nabla v_\sigma} =
                \abs{\nabla u_\sigma}_{\sharp}=\abs{\nabla u}_{\sharp} &\text{ a.e. in } \Omega^{\sharp} \\
                v_\sigma =
                \cfrac{\displaystyle{ \int_{\partial \Omega}  u_\sigma \, d \mathcal{H}^{n-1}}}{\displaystyle{\lvert \partial \Omega^{\sharp} \rvert }} =
                \cfrac{\displaystyle{ \int_{\partial \Omega}  u \, d \mathcal{H}^{n-1}}}{\displaystyle{ \lvert \partial \Omega^{\sharp} \rvert }} + \sigma \frac{\displaystyle{\abs{\partial \Omega}}}{\displaystyle{\lvert \partial \Omega^\sharp \rvert}} &\text{ on } \partial \Omega^{\sharp},
            \end{cases}
        \end{equation*}
        and
        \begin{equation}
            \label{eq_per_v_sigma_e_u_sigma}
            \norma{u_\sigma}_{L^1(\Omega)} \leq  \lVert{v_\sigma}\rVert_{L^1(\Omega^\sharp)},
        \end{equation}
        If we define 
        \begin{equation*}
            u^\asso:= v_\sigma - \sigma \frac{\displaystyle{\abs{\partial \Omega}}}{\displaystyle{\lvert \partial \Omega^\sharp \rvert}},
        \end{equation*}
        then $u^{\asso}$ solves
        \begin{equation}
            \begin{cases}
            \abs{\nabla u^\asso} = \abs{\nabla u}_{\sharp} &\text{ in } \Omega^{\sharp} \\
            u^\asso = \cfrac{\displaystyle{ \int_{\partial \Omega}  u \, d \mathcal{H}^{n-1}}}{\displaystyle{\lvert \Omega^{\sharp} \rvert }} &\text{ on } \partial \Omega^{\sharp},
            \end{cases}
        \end{equation}
        Sending $\sigma \to 0$ in \eqref{eq_per_v_sigma_e_u_sigma} we have
        \begin{align*}
            \norma{u}_{L^1(\Omega)} &\leq \lVert{u^\asso}\rVert_{L^1(\Omega^\sharp)}.
        \end{align*}

        \item[\textbf{Step 3}] Now we remove the assumption on the regularity of $\Omega$.
        
        Let $\Omega$ be a bounded, open and Lipschitz set, and $u \in W^{1,\infty}(\Omega)$.
        Then there exists a sequence $\Set{\Omega_k} \subset \R^n$ of open set with $C^{2}$ boundary such that $\Omega \subset \Omega_k, \; \forall k \in \mathbb{N}$ (for istance you can mollify $\chi_{\Omega}$ and take a suitable superlevel set) and
        \[
        \abs{\Omega_k \,  \triangle \, \Omega} \to 0 \qquad \mathcal{H}^{n-1}(\partial \Omega_k) \to \mathcal{H}^{n-1}(\partial \Omega) \qquad \text{ for } k \to +\infty .
        \]
        Let $\tilde{u}$ be an extension of $u$ in $\R^n$ such that
        \[
        \tilde{u} \rvert_{\Omega} \equiv u, \qquad \norma{\tilde{u}}_{W^{1,\infty}(\R^n)} \leq C \norma{u}_{W^{1,\infty}(\Omega)}.
        \]
        
        \noindent We define
        \[
        u_k = \tilde{u} \chi_{\Omega_k}, 
        \]
        
        and clearly $u_k = u$ in $\Omega$. By the previous step, we can construct $u_{k}^{\asso} \in W^{1,\infty}(\Omega_k^{\sharp})$ such that it is radial, $\abs{\nabla u_k}_* = \abs{\nabla u_k^{\asso}}_*$ and
        \begin{align}
            \label{confronto_norme_L^1_Lipschitz}
            \norma{u_k}_{L^1(\Omega_k)} &\leq \lVert u_k^{\asso} \rVert_{L^1(\Omega_k^{\sharp})} \\
            \label{tracce_uguali_Lipschitz}
            \int_{\partial \Omega_k} u_k \, d\mathcal{H}^{n-1}  & =\int_{\partial \Omega_k^{\sharp}} u_k^{\asso} \, d\mathcal{H}^{n-1}
        \end{align}
        Therefore, since $\lVert u_k \rVert_{W^{1,p}(\Omega_k)} \leq M$,  for all $p$, the  sequence $\Set{u_k^{\asso}}$ is equibounded in $W^{1,p}(\Omega^\sharp)$ and it has a subsequence which converges strongly in $L^p$ and weakly in $W^{1,p}$ to a function $w$.
        
        Let us prove that $\abs{\nabla u}$ and $\abs{\nabla w}$ has the same rearrangement.
        \[
            \limsup_k \, \bigl \lVert \abs{\nabla u_k^{\asso}} - \abs{\nabla u}_{\sharp} \bigr \rVert_{L^p(\Omega^{\sharp})} \leq \lim_k \, \bigl \lVert (f_k)_{\sharp} - f_{\sharp} \bigr \rVert_{L^p(\R^n)}
        \]
        where
        \[
            f (x) =
            \begin{cases}
                \abs{\nabla \tilde{u}} & \text{in }\Omega \\
                \lVert \nabla \tilde{u} \rVert_{L^{\infty}(\R^n)} & \text{in } \R^n \meno \Omega 
            \end{cases}
            \qquad \text{ and }
            f_k =
            \begin{cases}
                \abs{\nabla u_k} &\text{in } \Omega_k \\
                \lVert \nabla \tilde{u} \rVert_{L^{\infty}(\R^n)} & \text{in }\R^n \meno \Omega_k
            \end{cases}
        \]
        So using \eqref{eq_riarr_diminuiscono_distanza_Lp} we have
        \[
            \bigl \lVert (f_k)_{\sharp} - f_{\sharp} \bigr \rVert_{L^p(\R^n)} \leq \lVert f_k - f \rVert_{L^p(\R^n)} = \lVert f_k - f \rVert_{L^p(\Omega_k \meno \Omega)} \leq 2 \lVert \nabla \tilde{u} \rVert_{L^{\infty}(\R^n)} \abs{\Omega_k \meno \Omega}\
        \]
        that tends to $0$ as $k \to +\infty$ by the fact that $\abs{\Omega_k \triangle \Omega} \to 0$.

        \noindent Hence, the functions $\nabla w$ and $\nabla u$ has the same rearrangement, by the uniqueness of the weak limit in $\Omega^{\sharp}$.
        
        In the end, passing to limit $k \to +\infty$ in \eqref{confronto_norme_L^1_Lipschitz} and \eqref{tracce_uguali_Lipschitz}, we have
        \begin{align*}
            \lVert u \rVert_{L^1(\Omega)} &\leq \lVert w \rVert_{L^1(\Omega^{\sharp})} \\
            \int_{\partial \Omega} u \, d \mathcal{H}^{n-1} &= \int_{\partial \Omega^{\sharp}} w \, d\mathcal{H}^{n-1}.
        \end{align*}
        
        Hence $w= u^\asso$.

        \item[\textbf{Step 4}] Finally, we proceed by removing the assumption $u \in W^{1,\infty}(\Omega)$.
        
        If $u \in W^{1,p}(\Omega)$, by Meyers-Serrin Theorem, there exists a sequence $\{ u_k \} \subset C^{\infty}(\Omega) \cap W^{1,p}(\Omega)$ such that $u_k \to u$ in $W^{1,p}(\Omega)$. We can apply previous step to obtain $u_k^{\asso} \in W^{1,\infty}(\Omega^{\sharp})$ such that $\abs{\nabla u_k}$ and $\abs{\nabla u_k^{\asso}}$ are equally distributed and
        \begin{align}
            \label{confronto_norme_u_k}
            \norma{u_k}_{L^1(\Omega)} &\leq \norma{u_k^{\asso}}_{L^1(\Omega^{\sharp})} & & \forall k \in \mathbb{N} \\
            \label{uguaglianza_tracce_u_k}
            \int_{\partial \Omega} u_k \, d\mathcal{H}^{n-1} & = \int_{\partial \Omega^{\sharp}} u_k^{\asso} \, d\mathcal{H}^{n-1} & & \forall k \in \mathbb{N}.
        \end{align}
        Arguing as the previous step, there exists a function $w$ such that, up to a subsequence
        \[
            u_k^{\asso} \to w \text{ in } L^p(\Omega) \qquad \nabla u_k^{\asso} \convdeb \nabla w \text{ in } L^p(\Omega; \R^n)
        \]
        and $\abs{\nabla w}$ has the same rearrangement as $\abs{\nabla u}$.
        
        Finally, sending $k \to +\infty$ in \eqref{confronto_norme_u_k} and \eqref{uguaglianza_tracce_u_k}, we have
        \begin{align*}
            \norma{u}_{L^1(\Omega)} &\leq  \lVert w \rVert_{L^1(\Omega^{\sharp})} \\
            \int_{\partial \Omega} u \, d\mathcal{H}^{n-1} & = \int_{\partial \Omega^{\sharp}} w \, d\mathcal{H}^{n-1}.
        \end{align*}
        Hence $w= u^\asso$.\qedhere

    \end{enumerate}
\end{proof}

\section{An application to torsional rigidity}
\label{Section_4}
Let $\beta >0$, let $\Omega \subset \R^n$ be a bounded and  open set with Lipschitz boundary and let us consider the functional

\begin{equation}
    \mathcal{F}_{\beta}(\Omega, w) = \cfrac{ \displaystyle{  \int_{\Omega} \abs{\nabla w}^2 \, dx + \beta \abs{\partial \Omega}  \, \int_{\partial \Omega} w^2 \, d\mathcal{H}^{n-1} } }{\displaystyle{ \biggl(\int_{\Omega} w \, dx \biggr)^2 }} \qquad  w \in W^{1,2}(\Omega)
\end{equation}
and the associate minimum problem
\begin{equation}
    T(\Omega,\beta)  = \min_{ w \in W^{1,2}(\Omega) } \mathcal{F}_{\beta}(w)
\end{equation}
The minimum $u$ is a weak solution to
\begin{equation}
    \label{eq_soluzione_debole_Omega}
    \begin{cases} 
    -\Delta u = 1 &\text{in } \Omega \\[1ex]
    \displaystyle{\parzder{u}{\nu} + \beta \abs{\partial \Omega } \, u = 0} &\text{on } \partial \Omega
    \end{cases}
\end{equation}
Our aim is to compare $T(\Omega, \beta)$ with
\[
    T(\Omega^{\sharp},\beta) : = \min_{v \in W^{1,2}(\Omega)} \mathcal{F}_{\Omega,\beta}(v) = \min_{v \in W^{1,2}(\Omega)} \cfrac{ \displaystyle{ \int_{\Omega^\sharp} \abs{\nabla v}^2 \, dx +\beta \lvert \partial \Omega^{\sharp} \rvert \, \int_{\partial \Omega^\sharp} v^2 \, d\mathcal{H}^{n-1} } }{\displaystyle{ \biggl( \int_{\Omega^\sharp} v \, dx \biggr)^2 }}
\]
where the minimum is a weak solution to
\begin{equation}
    \label{eq_soluzione_debole_Omega_sharp}
    \begin{cases} 
    -\Delta z = 1 &\text{in } \Omega^{\sharp} \\[1ex]
    \displaystyle{\parzder{z}{\nu} + \beta \lvert \partial \Omega^{\sharp} \rvert \, z = 0} &\text{on } \partial \Omega^{\sharp}
    \end{cases}
\end{equation}

\begin{proof}[Proof of Corollary \ref{corollario_torsione_pesata}]
    Let $w \in W^{1,p}(\Omega)$, by Theorem \ref{Teorema_che_scriveremo} and Remark \ref{remark_holder} there exists $w^{\asso} \in W^{1,\infty}(\Omega^{\sharp})$ radial such that
    \[
        \int_{\Omega} \abs{\nabla w}^2 \, dx = \int_{\Omega^{\sharp}} \lvert \nabla w^{\asso} \rvert^2 \, dx \qquad \int_{\Omega}  \abs{w} \, dx \leq  \int_{\Omega^{\sharp}}  \lvert w^{\asso} \rvert \, dx \qquad \lvert \partial \Omega^{\sharp} \rvert  \,  \int_{\partial \Omega^{\sharp}} (w^\asso)^2 \leq  \abs{ \partial \Omega }  \, \int_{\partial \Omega} w^2
    \]
    Therefore
    \[
        \mathcal{F}_{\beta}(w) \geq \mathcal{F}_{\beta}(w^{\asso})
    \]
    Passing to the infimum on right-hand side and successively to the left-hand side, we obtain
    \[
        T(\Omega, \beta) \geq T(\Omega^{\sharp},\beta)
    \]
\end{proof}

\begin{oss}
    We highlight that all the arguments work also in the non-linear case, where the functional
    \begin{equation}
        \mathcal{F}_{\beta,p}(w) = \cfrac{ \displaystyle{ \int_{\Omega} \abs{\nabla w}^p \, dx + \beta \abs{\partial \Omega}^{p-1}   \, \int_{\partial \Omega} w^p \, d\mathcal{H}^{n-1} } }{\displaystyle{ \biggl(\int_{\Omega} w \, dx \biggr)^p }} \qquad \text{for } w \in W^{1,p}(\Omega).
    \end{equation}
    is considered.
\end{oss}

\vspace{1 em}

\section{\texorpdfstring{A weighted $L^1$ comparison}{}}
\label{Section_5}
Let us check how extend the result by \cite{Ta6} to the case of  function non vanishing on the boundary.

\begin{teorema}
    \label{Teorema_con_f}
    Let $\Omega \subset \R^n$ be a bounded, open and Lipschitz set. Let $f \in L^{\infty}(\Omega)$ be a function such that
    \begin{equation}
        \label{condizione_per_g}
        f^*(t) \geq \biggl( 1-\frac{1}{n} \biggr) \frac{1}{t} \int_0^t f^*(s) \, ds \qquad \forall t \in [0, \abs{\Omega}].
    \end{equation}
    If $u \in W^{1,p}(\Omega)$ and $u^{\asso}$ is the function given by Theorem \ref{Teorema_che_scriveremo}, then
    \begin{equation}
        \label{f-giiann}
        \int_{\Omega} f(x) u (x) \, dx \leq \int_{\Omega^{\sharp}} f^{\sharp} (x) u^{\asso} (x) \, dx.
    \end{equation}
\end{teorema}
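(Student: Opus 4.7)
My plan is to reduce to the zero-boundary case via the approximation scheme of Step~1 in the proof of Theorem~\ref{Teorema_che_scriveremo}. Let $u_\eps$ on $\Omega_\eps$ and $v_\eps$ on $\Omega_\eps^\sharp$ denote the extensions from that proof, so $u_\eps\big|_\Omega = u$, $u_\eps = 0$ on $\partial\Omega_\eps$, $v_\eps = 0$ on $\partial\Omega_\eps^\sharp$, $v_\eps$ is radially decreasing, and $\abs{\nabla v_\eps} = \abs{\nabla u_\eps}_\sharp$ a.e.\ in $\Omega_\eps^\sharp$. Extend $f$ to $\tilde f_\eps$ on $\Omega_\eps$ by setting $\tilde f_\eps \equiv f^*(\abs{\Omega})$ on $\Sigma_\eps$ (excluding the trivial case $f \equiv 0$, condition \eqref{condizione_per_g} forces $f^*(\abs{\Omega}) > 0$). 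A direct calculation shows that $\tilde f_\eps^* = f^*$ on $[0,\abs{\Omega}]$ and $\tilde f_\eps^* = f^*(\abs{\Omega})$ on $[\abs{\Omega}, \abs{\Omega_\eps}]$, and that \eqref{condizione_per_g} remains valid on all of $[0, \abs{\Omega_\eps}]$: for $s > \abs{\Omega}$ it reduces to $\bigl(\abs{\Omega} + (s - \abs{\Omega})/n\bigr) f^*(\abs{\Omega}) \geq (1 - 1/n) \int_0^{\abs{\Omega}} f^*(\tau)\,d\tau$, which follows from \eqref{condizione_per_g} at $t = \abs{\Omega}$.

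The central step is the zero-boundary weighted Talenti comparison $\int_{\Omega_\eps} \tilde f_\eps\, u_\eps\,dx \leq \int_{\Omega_\eps^\sharp} \tilde f_\eps^\sharp\, v_\eps\,dx$. By Hardy-Littlewood the left side is bounded by $\int_0^{\abs{\Omega_\eps}} \tilde f_\eps^*(s)\, u_\eps^*(s)\,ds$, while the radial structure of $v_\eps$ gives the equality $\int_{\Omega_\eps^\sharp} \tilde f_\eps^\sharp v_\eps\,dx = \int_0^{\abs{\Omega_\eps}} \tilde f_\eps^*(s)\, V_\eps(s)\,ds$, with $V_\eps(s) := v_\eps\bigl((s/\omega_n)^{1/n}\bigr)$. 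Since $u_\eps^*(\abs{\Omega_\eps}) = V_\eps(\abs{\Omega_\eps}) = 0$, integration by parts with $\tilde G_\eps(s) := \int_0^s \tilde f_\eps^*(r)\,dr$ converts $\int_0^{\abs{\Omega_\eps}} \tilde f_\eps^*(s)(V_\eps - u_\eps^*)(s)\,ds$ into
\[
\int_0^{\abs{\Omega_\eps}} \tilde G_\eps(s) \left[\frac{\abs{\nabla u_\eps}_*(s)}{n\omega_n^{1/n}\, s^{(n-1)/n}} - \bigl(-(u_\eps^*)'(s)\bigr)\right] ds.
\]
Now \eqref{condizione_per_g} is equivalent to $\tilde H_\eps(s) := \tilde G_\eps(s)/s^{(n-1)/n}$ being non-negative and non-decreasing, and a Hardy-Littlewood-P\'{o}lya type lemma (for $H \geq 0$ non-decreasing one has $\int_0^T H\gamma\,ds = H(0)\Phi(0) + \int_0^T H'(s)\Phi(s)\,ds \geq 0$ with $\Phi(s) = \int_s^T \gamma\,d\tau \geq 0$) reduces the positivity of the above to the pointwise tail estimate
\[
\frac{1}{n\omega_n^{1/n}} \int_0^{\abs{\Omega_\eps}-s} \abs{\nabla u_\eps}^*(\sigma)\,d\sigma \;\geq\; \int_0^{u_\eps^*(s)} \mu_{u_\eps}(t)^{(n-1)/n}\,dt \qquad \forall s \in [0, \abs{\Omega_\eps}].
\]

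This tail estimate is the classical Talenti-P\'{o}lya-Szeg\H{o} chain: since $u_\eps = 0$ on $\partial\Omega_\eps$, for $t > 0$ one has $\mathcal{H}^{n-1}(\{u_\eps = t\}) = P(\{u_\eps > t\})$, so coarea and the isoperimetric inequality give $\int_{\{u_\eps < u_\eps^*(s)\}} \abs{\nabla u_\eps}\,dx = \int_0^{u_\eps^*(s)} P(\{u_\eps > t\})\,dt \geq n\omega_n^{1/n} \int_0^{u_\eps^*(s)} \mu_{u_\eps}(t)^{(n-1)/n}\,dt$, while Hardy-Littlewood yields $\int_{\{u_\eps < u_\eps^*(s)\}} \abs{\nabla u_\eps}\,dx \leq \int_0^{\abs{\Omega_\eps}-s} \abs{\nabla u_\eps}^*(\sigma)\,d\sigma$; combining proves the tail estimate and hence the zero-boundary claim. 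Finally I pass to the limit $\eps \to 0$: the convergence $v_\eps \to u^\asso$ in $L^1(\Omega^\sharp)$ established in Theorem~\ref{Teorema_che_scriveremo}, together with $\norma{\tilde f_\eps}_{L^\infty} \leq \norma{f}_{L^\infty}$ and $\abs{\Sigma_\eps}, \abs{\Sigma_\eps^\sharp} \to 0$, yields $\int_{\Omega_\eps} \tilde f_\eps u_\eps\,dx \to \int_\Omega f u\,dx$ and $\int_{\Omega_\eps^\sharp} \tilde f_\eps^\sharp v_\eps\,dx \to \int_{\Omega^\sharp} f^\sharp u^\asso\,dx$, proving \eqref{f-giiann}. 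The main obstacle is the second paragraph: the integration by parts, the recognition of \eqref{condizione_per_g} as the monotonicity of $\tilde H_\eps$, and the Hardy-Littlewood-P\'{o}lya reduction to a Talenti-type tail inequality must be orchestrated in the correct sequence.
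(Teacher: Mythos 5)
Your proposal is correct in architecture and its outer layers (the extension of $u$ to $u_\eps$, the extension of $f$ by the constant $f^*(\abs{\Omega})$ on $\Sigma_\eps$, the verification that \eqref{condizione_per_g} survives on $[\abs{\Omega},\abs{\Omega_\eps}]$, and the final limit $\eps\to 0$) coincide with the paper's. Where you genuinely diverge is in the core zero-boundary inequality $\int_{\Omega_\eps}\tilde f_\eps u_\eps \leq \int_{\Omega_\eps^\sharp}\tilde f_\eps^\sharp v_\eps$. The paper does not re-derive this from scratch: it starts from the pointwise Giarrusso--Nunziante estimate $u^*(s)\leq \frac{1}{n\omega_n^{1/n}}\int_s^{\abs{\Omega}}F(t)t^{\frac{1}{n}-1}\,dt$, where $F$ is the pseudo-rearrangement of $\abs{\nabla u}$ along the level sets of $u$, multiplies by $f^*$, applies Fubini, and then uses the Alvino--Trombetti lemma (weak approximation of $F$ by functions equi-rearranged with $\abs{\nabla u}$) together with Hardy--Littlewood to replace $F$ by $\abs{\nabla u}_*$; the monotonicity of $g(t)=t^{\frac{1}{n}-1}\int_0^t f^*$ — your $\tilde H_\eps$ — enters at exactly that point. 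You instead perform two summations by parts and reduce everything to the Talenti tail inequality, which you then prove by coarea, isoperimetry and Hardy--Littlewood. Both arguments pivot on the same monotonicity hypothesis; yours is more self-contained (no pseudo-rearrangements), the paper's is shorter because it outsources the analytic content to \cite{GN} and \cite{AT}.

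One caveat: your route implicitly assumes that $u_\eps^*$ is absolutely continuous, both in the integration by parts (writing $du_\eps^*=(u_\eps^*)'\,ds$ with no singular part) and in the change of variables $\int_s^T\sigma^{\frac{n-1}{n}}(-(u_\eps^*)')\,d\sigma=\int_0^{u_\eps^*(s)}\mu_{u_\eps}(t)^{\frac{n-1}{n}}\,dt$. Since $u_\eps^*$ is decreasing, a nontrivial singular part of $du_\eps^*$ would contribute with the wrong sign to $\int_0^T\tilde G_\eps\,du_\eps^*$, so this is not automatic; circumventing precisely this issue is the raison d'\^{e}tre of the pseudo-rearrangement $F$ and of Lemma \ref{lemma_Alvino_Trombetti} in the paper's version. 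The gap is standard and repairable (e.g.\ via the P\'olya--Szeg\H{o} principle, which gives $u_\eps^\sharp\in W^{1,1}_0$ and hence local absolute continuity of $u_\eps^*$, or by switching to the pseudo-rearrangement formulation), but as written your sketch glosses over it.
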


\begin{proof}
    If $u \in W_0^{1,p}(\Omega)$, the result is contained in \cite{Ta6}. We recall it, for sake of completeness.

    By \cite[eq. 2.7]{GN} it is known
    \begin{equation}
        \label{Giarrusso_Nunziante_puntuale}
        u^*(s) \leq \frac{1}{n \omega_n^{\frac{1}{n}}} \int_s^{\abs{\Omega}} \frac{F(t)}{t^{1-\frac{1}{n}}} \, dt
    \end{equation}
    where $F$ is a function such that
    \[
    \int_0^s F(t) \, dt = \int_{D(s)} \abs{\nabla u}_{*}(s) \, ds
    \]
    with $D(s)$ defined in Section \ref{Section_2}.
    
    \noindent Setting $\displaystyle{g(t) := \frac{1}{t^{1-\frac{1}{n}}} \int_0^t f^*(s) \, ds}$, multiplying both terms of \eqref{Giarrusso_Nunziante_puntuale} for $f^*(s)$, integrating from $0$ to $\abs{\Omega}$ and using Fubini's Theorem we get
    \begin{equation}
        \label{integrale_f_u}
        \int_0^{\abs{\Omega}} f^*(s) u^*(s) \, ds \leq
        \frac{1}{n\omega_n^{\frac{1}{n}}} \int_0^{\abs{\Omega}} f^*(s) \biggl( \int_s^{\abs{\Omega}}  \frac{F(t)}{t^{1-\frac{1}{n}}} \, dt \biggr) \, ds
        = \frac{1}{n\omega_n^{\frac{1}{n}}} \int_0^{\abs{\Omega}} F(t) g(t)
        \, dt
    \end{equation}
    Let us suppose that $g(t)$ is non-decreasing, so $g_*(s) = g(s)$ and by Lemma \ref{lemma_Alvino_Trombetti} there exists a sequence $\{F_k \}$ such that $(F_k)_* = (\nabla u)_*$ and $F_k \convdeb F$ in $BV$. Therefore
    \[
        \int_0^{\abs{\Omega}} F(t) g(t) \, dt  = \lim_k \int_0^{\abs{\Omega}} F_k(t) g(t) \, dt
    \]
    Using Hardy-Littlewood's inequality we have
    \[
        \lim_k \int_0^{\abs{\Omega}} F_k(t) g(t) \, dt \leq \int_0^{\abs{\Omega}} \abs{\nabla u}_*(t) g_*(t) \, dt = \int_0^{\abs{\Omega}} \abs{\nabla u}_*(t) g(t) \, dt
    \]
    Hence, by \eqref{integrale_f_u} and Fubini's Theorem, we obtain
    \begin{align*}
        \int_0^{\abs{\Omega}} f^*(t) u^*(t) \, dt &\leq \frac{1}{n\omega_n^{\frac{1}{n}}} \int_0^{\abs{\Omega}} \abs{\nabla u}_*(t) \, g(t) \, dt \\
        & = \frac{1}{n\omega_n^{\frac{1}{n}}} \int_0^{\abs{\Omega}} \abs{\nabla u}_*(t) \Biggl( \frac{1}{t^{1-\frac{1}{n}}} \int_0^t f^*(s) \, ds \Biggr) \, dt \\
        & = \int_0^{\abs{\Omega}} f^*(s) \biggl( \frac{1}{n\omega_n^{\frac{1}{n}}} \int_s^{\abs{\Omega}} \frac{\abs{\nabla u}_*(t)}{t^{1-\frac{1}{n}}} \, dt \biggr) \, ds \\
        & = \int_0^{\abs{\Omega}} f^*(s) (u^{\asso})^*(s) \, ds
    \end{align*}
    Therefore, by Hardy-Littlewood inequality, we have
    \begin{equation}
        \int_{\Omega} f(x)u(x)  \, dx \leq \int_0^{\abs{\Omega}} f^*(t) u^*(t) \leq \int_0^{\abs{\Omega}} f^*(s) (u^{\asso})^*(s) \, ds = \int_{\Omega^{\sharp}} f^{\sharp}(x) \, u^{\asso}(x) \, dx
    \end{equation}
    But we have to deal with the assumption that $g$ is non-decreasing, that is
    \begin{equation*}
        g'(t) \geq 0  \iff \frac{d}{dt} \biggl( \frac{1}{t^{1-\frac{1}{n}}} \int_0^t     f^*(s) \, ds \biggr) = - \frac{n-1}{n} \frac{1}{t^{2-\frac{1}{n}}} \biggl(     \int_0^t f^*(s) \, ds \biggr) + \frac{1}{t^{1-\frac{1}{n}}}f^*(t) \geq 0,
    \end{equation*}
    hence, if and only if 
    \begin{equation*}
    	  f^*(t) \geq \biggl( 1-\frac{1}{n} \biggr) \frac{1}{t} \int_0^t f^*(s) \, ds.
    \end{equation*}

    \noindent Now let us deal with $u \notin W^{1,p}_0(\Omega)$. Suppose that $u \in C^2(\Omega)$ is a non-negative function, that $\Omega$ has $C^2$ boundary and that $f$ satisfies \eqref{condizione_per_g}. Proceeding as in Step 1 of Theorem \ref{Teorema_che_scriveremo}, for every $\varepsilon>0$ we can construct $u_{\varepsilon}$ that coincides with $u$ in $\Omega$ and is zero on $\partial \Omega_{\varepsilon}$. Moreover we can extend $f$ to $\Omega_{\varepsilon}$ simply defining
    \[
        f_{\eps} (t) =
        \begin{cases}
            f(x) &\text{in } \Omega \\
            f^*(\abs{\Omega}) &\text{in } \Omega_{\varepsilon} \meno \Omega
        \end{cases}
    \]
    The rearrangement, for every $\varepsilon>0$, is
    \[
        f_{\varepsilon}^*(t) =
        \begin{cases}
            f^*(t) &\text{in } \bigl[ 0,\abs{\Omega} \bigr ] \\[1ex]
            f^*(\abs{\Omega}) &\text{in } \bigl[  \abs{\Omega}, \abs{\Omega_{\varepsilon}} \bigr ],
        \end{cases}
    \]
    so we just have to check \eqref{condizione_per_g} for $t \in \bigl[ \abs{\Omega}, \abs{\Omega_{\varepsilon}} \bigr]$, namely
    \begin{equation}
        \label{27_fuori}
        f_{\varepsilon}^*(t)  \geq \biggl( \frac{n-1}{n} \biggr) \frac{1}{t} \int_0^{t} f_{\varepsilon}^*(s) \, ds.
    \end{equation}
    Keeping in mind that $f$ verifies \eqref{condizione_per_g}, we have
    \[
        f_{\varepsilon}^*(t)= f^{\ast}(\abs{\Omega}) \geq\biggl( \frac{n-1}{n} \biggr) \frac{1}{\abs{\Omega}} \int_0^{\abs{\Omega}} f^*(s) \, ds.
    \]
    If we show that 
    \[
        \frac{1}{\abs{\Omega}} \int_0^{\abs{\Omega}} f^*(s) \, ds \geq\left[  \frac{1}{t} \int_0^{\abs{\Omega}} f^*(s) \, ds + \frac{t-\abs{\Omega}}{t} f^*(\abs{\Omega}) \right] =\frac{1}{t} \int_0^{t} f_\varepsilon^*(s) \, ds
    \]
    then \eqref{27_fuori} is true.
    By direct calculations
    \[
        \frac{t-\abs{\Omega}}{t \abs{\Omega}} \int_0^{\abs{\Omega}} f^*(s) \, ds \geq \frac{t-\abs{\Omega}}{t} f^*(\abs{\Omega}) \iff \frac{1}{\abs{\Omega}} \int_0^{\abs{\Omega}} f^*(s) \, ds  \geq f^*(\abs{\Omega}).
    \]
    that is true of the fact that $f^*$ is decreasing.

    So, $\forall \varepsilon >0 $ we can apply the first part of the Theorem obtaining
    \[
        \int_{\Omega_{\varepsilon}} u_{\varepsilon} f_{\varepsilon} \, dx \leq \int_{\Omega_{\varepsilon}^{\sharp}}  v_{\varepsilon} f_{\varepsilon}^{\sharp} \, dx
    \]
    Sending $\varepsilon \to 0$ we get
    \[
        \int_{\Omega} u f \, dx \leq \int_{\Omega_{\sharp}} u^{\asso} f^{\sharp} \, dx.
    \]
    
    Arguing as in Theorem \ref{Teorema_che_scriveremo}, we get \eqref{f-giiann}. 
\end{proof}

\begin{oss}
    Condition \eqref{condizione_per_g} implies the $f$ is strictly positive.
    Moreover, if the essential oscillation of $f$ is bounded 
    \[ 
        \label{condizione_per_g_crescente}
        \essosc \abs{f} := \frac{\displaystyle{\esssup_{x \in \Omega} \abs{f(x)}}}{\displaystyle{\essinf_{x \in \Omega} \abs{f(x)}}} \leq \frac{n}{n-1}
    \]
    then \eqref{condizione_per_g} is satisfied.
\end{oss}

Theorem \ref{Teorema_con_f} allows us to compare the minimum of
\[
    T_{\beta, f}(\Omega) : = \min_{w \in W^{1,2}(\Omega)}\left\{\frac{1}{2} \int_{\Omega} \abs{\nabla w}^2 \, dx + \frac{\beta \abs{\partial \Omega}}{2}  \, \int_{\partial \Omega} w^2 \, d\mathcal{H}^{n-1} - \int_{\Omega} wf \, dx\right\}
\]
with the one of
\[
    T_{\beta, f}(\Omega^{\sharp}) : =\min_{v \in W^{1,2}(\Omega^{\sharp})}  \left\{\frac{1}{2} \int_{\Omega^{\sharp}} \abs{\nabla v}^2 \, dx + \frac{\beta \lvert \partial \Omega^{\sharp} \rvert }{2}  \, \int_{\partial \Omega^{\sharp}} v^2 \, d\mathcal{H}^{n-1} - \int_{\Omega^{\sharp}} vf^{\sharp} \, dx \right\}.
\]

\begin{corollario}
    Let $\beta>0$, let $\Omega \subset \R^n$ be a bounded, open and Lipschitz set. If $f$ satisfies \eqref{condizione_per_g}, then denoting with $\Omega^{\sharp}$ the ball centered at the origin with same measure as $\Omega$, it holds
    \[
        T_{\beta,f}(\Omega) \geq T_{\beta, f^{\sharp}} (\Omega^{\sharp})
    \]
\end{corollario}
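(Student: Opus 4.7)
The plan is to test the functional defining $T_{\beta,f^\sharp}(\Omega^\sharp)$ at the radial surrogate $w^\asso$ produced by Theorem \ref{Teorema_che_scriveremo}, mirroring the argument already used for Corollary \ref{corollario_torsione_pesata} but with an extra term to control. As a preliminary step, I would reduce to non-negative competitors: the Remark following Theorem \ref{Teorema_con_f} notes that \eqref{condizione_per_g} forces $f>0$, so replacing an arbitrary $w\in W^{1,2}(\Omega)$ by $|w|$ leaves the Dirichlet and boundary quadratic terms unchanged and can only increase $\int_\Omega wf\,dx$; hence the functional value does not grow, and the infimum is achieved over the cone $\{w\ge 0\}$. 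This reduction is essential because both Theorem \ref{Teorema_che_scriveremo} and Theorem \ref{Teorema_con_f} are stated for non-negative inputs.

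Next, fixing $w\ge 0$ in $W^{1,2}(\Omega)$, I would invoke Theorem \ref{Teorema_che_scriveremo} to produce $w^\asso\in W^{1,2}(\Omega^\sharp)$ and then check three separate comparisons term by term between the functional at $w$ and the functional at $w^\asso$. First, the Dirichlet term is preserved since $|\nabla w^\asso|$ is equidistributed with $|\nabla w|$ by \eqref{eq_che_risolve_u_picche}. Second, the weighted boundary term tilts the right way thanks to Remark \ref{remark_holder} applied with $p=2$, which gives $|\partial \Omega^\sharp|\int_{\partial \Omega^\sharp}(w^\asso)^2\,d\mathcal{H}^{n-1}\le |\partial \Omega|\int_{\partial \Omega}w^2\,d\mathcal{H}^{n-1}$. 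Third, the load term is handled by Theorem \ref{Teorema_con_f}: condition \eqref{condizione_per_g} yields $\int_\Omega fw\,dx\le \int_{\Omega^\sharp}f^\sharp w^\asso\,dx$, so $-\int_\Omega fw\,dx\ge -\int_{\Omega^\sharp}f^\sharp w^\asso\,dx$.

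Combining the three inequalities shows that the functional defining $T_{\beta,f}(\Omega)$ evaluated at $w$ dominates the functional defining $T_{\beta,f^\sharp}(\Omega^\sharp)$ evaluated at $w^\asso$; bounding the latter below by $T_{\beta,f^\sharp}(\Omega^\sharp)$ and then taking the infimum over $w$ on the left yields the desired $T_{\beta,f}(\Omega)\ge T_{\beta,f^\sharp}(\Omega^\sharp)$.

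I do not expect a substantive analytic obstacle, since the heavy lifting has been front-loaded into Theorems \ref{Teorema_che_scriveremo} and \ref{Teorema_con_f}; the sole point demanding attention is the bookkeeping of signs, which must be arranged so that all three comparisons favor the inequality. The hypothesis $f>0$, implied by \eqref{condizione_per_g}, is precisely what licenses both the reduction to $w\ge 0$ and the sign flip in the load term when moving from $\int fw$ to $-\int fw$; without it neither of those steps would be legitimate.
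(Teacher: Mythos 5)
Your proof is correct and is essentially the argument the paper intends: the corollary is stated without an explicit proof, but it is meant to follow exactly as Corollary \ref{corollario_torsione_pesata} does — a term-by-term comparison at the surrogate $w^\asso$, with the Dirichlet term preserved by equidistribution, the boundary term controlled by Remark \ref{remark_holder}, and the load term now handled by Theorem \ref{Teorema_con_f}. Your explicit reduction to non-negative competitors via $|w|$ (using that \eqref{condizione_per_g} forces $f>0$) is a point the paper glosses over even in Corollary \ref{corollario_torsione_pesata}, and it is a worthwhile addition since both cited theorems require non-negative inputs.
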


Moreover we can use Theorem \ref{Teorema_con_f} to get a comparison between Lorentz norm of $u$ and $u^{\asso}$.
\begin{corollario}
    Let $1 \leq p \leq \frac{n}{n-1}$, under the assumption of Theorem \ref{Teorema_che_scriveremo} it holds
    \begin{equation}
        \label{eq_norme_Lorentz_L_p1}
        \norma{u}_{L^{p,1}(\Omega)} \leq \lVert u^{\asso} \rVert_{L^{p,1}(\Omega^{\sharp})}
    \end{equation}
    where $u^{\asso}$ is the function given by Theorem \ref{Teorema_che_scriveremo}
\end{corollario}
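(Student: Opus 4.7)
The plan is to reduce \eqref{eq_norme_Lorentz_L_p1} to the scalar inequality
\[
\int_0^{|\Omega|} t^{\frac{1}{p}-1} u^*(t)\,dt \leq \int_0^{|\Omega|} t^{\frac{1}{p}-1} (u^\asso)^*(t)\,dt,
\]
which is \eqref{eq_norme_Lorentz_L_p1} once the Lorentz norm is spelled out with $q=1$. The case $p=1$ is immediate, as $L^{1,1}=L^1$ and the inequality reduces to \eqref{norma_L1}; hence I would restrict to $1<p\leq n/(n-1)$ and apply Theorem \ref{Teorema_con_f} with a weight whose decreasing rearrangement is $t^{1/p-1}$.

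Since $t^{1/p-1}$ is unbounded at the origin, I would truncate: set $\phi_k(t):=\min\{t^{1/p-1},k\}$ and pick any $f_k\in L^\infty(\Omega)$ with $f_k^*=\phi_k$ (for instance by composing $\phi_k$ with a measure-preserving bijection $\Omega\to[0,|\Omega|]$). The first substantive step is to verify that $\phi_k$ satisfies \eqref{condizione_per_g}. On the interval where $\phi_k\equiv k$ the condition $k\geq(1-1/n)k$ is trivial, while on the complementary interval the computation $\int_0^t s^{1/p-1}\,ds=p\,t^{1/p}$ shows that \eqref{condizione_per_g} is equivalent, modulo a nonnegative correction coming from the truncation, to $(1-1/n)p\leq 1$, i.e.\ $p\leq n/(n-1)$. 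This is the step that pins down the stated range of $p$ and I expect it to be the only non-routine point in the argument.

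With \eqref{condizione_per_g} in hand, Theorem \ref{Teorema_con_f} applies to $f_k$; indeed, I would exploit the stronger intermediate inequality that appears inside its proof, namely
\[
\int_0^{|\Omega|}\phi_k(s)\,u^*(s)\,ds\leq \int_0^{|\Omega|}\phi_k(s)\,(u^\asso)^*(s)\,ds,
\]
which is precisely the middle link of the chain displayed just before the remark on the essential oscillation of $f$. Equivalently, because $u^\asso$ is radially decreasing by the construction in Theorem \ref{Teorema_che_scriveremo}, one has $(u^\asso)^\sharp=u^\asso$, so $\int_{\Omega^\sharp}(f_k)^\sharp u^\asso\,dx$ coincides with the right-hand side above.

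Finally I would send $k\to\infty$: since $\phi_k(t)\uparrow t^{1/p-1}$ pointwise on $(0,|\Omega|]$, monotone convergence on both sides yields the scalar inequality with the limiting weight, hence \eqref{eq_norme_Lorentz_L_p1}. The verification of \eqref{condizione_per_g} for the truncations is where the regime $1\leq p\leq n/(n-1)$ really emerges; everything else is a straightforward limiting argument on top of Theorem \ref{Teorema_con_f}.
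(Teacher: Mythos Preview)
Your proposal is correct and follows essentially the same route as the paper: both reduce \eqref{eq_norme_Lorentz_L_p1} to an application of Theorem \ref{Teorema_con_f} with the weight $f^*(t)=t^{1/p-1}$, and the key computation $\tfrac{1}{t}\int_0^t s^{1/p-1}\,ds = p\,t^{1/p-1}$ shows that \eqref{condizione_per_g} holds precisely when $p\le n/(n-1)$. Your truncation $\phi_k=\min\{t^{1/p-1},k\}$ and monotone-convergence passage add rigor that the paper omits (it applies Theorem \ref{Teorema_con_f} directly despite the weight not lying in $L^\infty$), but the substance of the argument is the same.
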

    
\begin{proof}
    Let us explicit the $L^{p,1}$ norm of $u$
    \[
        \norma{u}_{L^{p,1} (\Omega)} =  \int_0^{+\infty} t^{\frac{1}{p}-1} u^*(t) \, dt = \int_0^{+\infty} t^{-\frac{1}{p'}} u^*(t) \, dt
    \]
    Hence by Theorem \ref{Teorema_con_f}, it is sufficient that
    \begin{equation}
        \label{condiz_per_norma_lorentz}
        t^{-\frac{1}{p'}} - \frac{n-1}{n} \frac{1}{t} \int_0^t s^{-\frac{1}{p'}} \, ds \geq 0.
    \end{equation}
    If we compute
    \[
        \frac{1}{t} \int_0^t s^{-\frac{1}{p'}} \, ds =  \frac{1}{t} p \, t^{-\frac{1}{p'}+1} =  p \, t^{-\frac{1}{p'}},
    \]
    then we have
    \[
        t^{-\frac{1}{p'}} - \frac{n-1}{n} \frac{1}{t} \int_0^t s^{-\frac{1}{p'}} \, ds =t^{-\frac{1}{p'}} \biggl( 1-\frac{n-1}{n}p \biggr) \geq 0 \iff p \leq \frac{n}{n-1}
    \]
    so \eqref{condiz_per_norma_lorentz} is true and we can apply Theorem \ref{Teorema_con_f} obtaining
    \[
        \int_0^{+\infty} t^{-\frac{1}{p'}} u^*(t) \, dt \leq \int_0^{+\infty} t^{-\frac{1}{p'}} u^{\asso}(t) \, dt
    \]
    that is \eqref{eq_norme_Lorentz_L_p1}.
\end{proof}

\begin{oss}
    We emphasize that the bound $p \leq \frac{n}{n-1}$ is the best we can hope for Lorentz norm $L^{q,1}$. Indeed, if by absurd \eqref{eq_norme_Lorentz_L_p1} holds for $p>\frac{n}{n-1}$, by the embedding of $L^{p,q}$ spaces, $L^{q,1}(\Omega) \subseteq L^{q,q}(\Omega) = L^q(\Omega)$, which gives a contradiction.
\end{oss}

\vspace{1em}

\addcontentsline{toc}{chapter}{Bibliografia}

\printbibliography[heading=bibintoc, title={References}]

\begin{abstract}
    \textsc{Dipartimento di Matematica e Applicazioni “R. Caccioppoli”, Universita` degli Studi di Napoli “Federico II”, Complesso Universitario Monte S. Angelo, via Cintia - 80126 Napoli, Italy.}
    
    \textsf{e-mail: vincenzo.amato@unina.it}

	\vspace{0.5cm}
	
	\textsc{Mathematical and Physical Sciences for Advanced Materials and Technologies, Scuola Superiore Meridionale, Largo San Marcellino 10, 80126 Napoli, Italy.}
	
	\textsf{e-mail: andrea.gentile2@unina.it}
\end{abstract}

\end{document}